\documentclass[11pt]{amsart}
\usepackage{amsmath,amsfonts,graphicx}
\usepackage{amssymb}
\usepackage{comment}
\usepackage{color}
\usepackage{tikz}
\usepackage{stmaryrd}
\usepackage{float}
\usepackage{stmaryrd}
\usepackage{multicol}
\usepackage[hidelinks]{hyperref}
\usepackage[nameinlink]{cleveref}
\usepackage{aliascnt}
\usepackage{enumitem}
\usepackage[caption = false]{subfig}
\usepackage[backend=biber, style=alphabetic, sorting=nyt, maxbibnames=99, maxalphanames=99]{biblatex}
\DeclareLabelalphaNameTemplate{
  \namepart[strwidth=1]{prefix}
  \namepart[strwidth=1]{family}
}

\newtheorem{thm}{Theorem}[section]
\newaliascnt{prpcnt}{thm}
\newtheorem{prp}[prpcnt]{Proposition}
\aliascntresetthe{prpcnt}
\crefname{prpcnt}{proposition}{propositions}
\Crefname{prpcnt}{Proposition}{Propositions}
\newtheorem{lmm}[thm]{Lemma}

\newtheorem{dfn}[thm]{Definition}
\newtheorem{rmk}[thm]{Remark}
\newtheorem{cjt}[thm]{Conjecture}

\newtheorem{qtn}[thm]{Question}

\crefname{prp}{proposition}{propositions}
\Crefname{prp}{Proposition}{Propositions}
\crefname{figure}{figure}{figures}
\Crefname{figure}{Figure}{Figures}

\renewcommand{\labelitemi}{$\blacktriangleright$}
\renewcommand{\labelitemii}{$\triangleright$}

\title{Marker endomorphisms for complex H\'enon maps}
\author{Ivan Cuezva} 
\address{\hspace{-3em}DER de math\'ematiques, ENS Paris-Saclay, Gif-sur-Yvette, France. Email: \texttt{ivan.cuezva@ens-paris-saclay.fr}} 

\begin{document}

\begin{abstract}
    This paper investigates some properties of the compound marker endomorphisms appearing in the study of the monodromy action of H\'enon maps. The main result is to present a computable criterion for checking the bijectivity of a single marker endomorphism. Some direct corollaries are also developed, keeping in mind the H\'enon dynamical aspect. Another point of the paper is the reformulation of compound marker endomorphisms in terms of permutive functions, which have been introduced by Hedlund. This reformulation allows to directly apply some of his results.
\end{abstract} 

\subjclass[2020]{37F20, 37F80}
\keywords{Marker endomorphisms. H\'enon maps. Permutive functions. Monodromy action}

\maketitle


\section{Introduction}

In this article, we investigate the monodromy problem of the complex H\'enon family :
\vspace{1em}
\begin{center}
    $H_{b,c} : \left\{
    \begin{array}{ccc}
    \mathbb{C}^2 &\to &\mathbb{C}^2  \\[1em]
    \begin{pmatrix}
    x \\
    y 
    \end{pmatrix} &\mapsto 
    &\begin{pmatrix}
    x^2 + c - by \\
    x 
    \end{pmatrix}
    \end{array}
    \right.$\\
\end{center}
\vspace{1em}
where $(b, c) \in \mathbb{C}^2$ is the parameter. This family has been introduced as the complexified dynamical system of the famous H\'enon family on $\mathbb{R}^2$, which exhibits a strange attractor. The monodromy problem tries to capture how the points in the Julia set are exchanged when the parameter changes along a loop in the parameter space. 

When $b\ne 0$, it is known that the H\'enon maps on their Julia sets are topologically conjugate to the shift map $\sigma : \Sigma_2\to\Sigma_2$ on the space of two-sided sequences on two symbols $\Sigma_2=\{0, 1\}^{\mathbb{Z}}$, for parameters in a certain locus $\mathcal{H}$ containing $\{(b, c)\in\mathbb{C}^{\times}\times \mathbb{C} : |c|>2(1+|b|)^2\}$, where $\mathbb{C}^{\times}=\mathbb{C}\setminus\{0\}$ (see \cite{Oberste-Vorth} and \cite{Ishii-Smillie}).  Moreover, we also know that the complex H\'enon map is hyperbolic on its Julia set for a parameter in $\mathcal{H}$ (see \cite{Ishii}). Thus, every point in the Julia set varies continuously when the parameter moves along a loop in $\mathcal{H}$, thanks to the hyperbolicity (see \cite{Shub}). This provides two codings of the Julia set by $\Sigma_2$ at the initial time and the final time of the loop. By composing these two coding maps, we obtain a shift commuting homeomorphism of $\Sigma_2$. Denote by $\mathrm{Aut}(\Sigma_2)$ the group of shift commuting homeomorphisms of $\Sigma_2$. By choosing a root point $(b_0, c_0) \in \mathcal{H}$ of loops, the above construction yields a group homomorphism $\rho : \pi_1 (\mathcal{H}, (b_0, c_0) ) \to \mathrm{Aut} (\Sigma_2)$. 

Note that when $b = 0$, the dynamics of $H_{b, c}$ reduces to the family of quadratic polynomials. In the seminal paper \cite{Blanchard}, a similar monodromy problem has been studied for polynomials of degree $d\geq 2$. They have shown that the corresponding monodromy homomorphism is surjective.

Even now, the image $\Gamma=\rho(\pi_1 (\mathcal{H}, (b_0, c_0)))$ of the monodromy homomorphism for the H\'enon family remains mysterious. Actually, even calculating the image of a given loop by $\rho$ is quite complicated. A conjecture due to Hubbard claims that $\rho$ is surjective, i.e. $\Gamma = \mathrm{Aut}(\Sigma_2)$. However, recent considerations about gyration numbers suggest that the shift map would not be contained in the image. Another conjecture due to Lipa \cite{Lipa} tries to give a method to calculate the monodromy action along a loop depending on its position in the parameter space. Aside from a method to calculate the monodromy of a specific loop, the conjecture also suggests a close relation between the monodromy action and a global combinatorial structure of the parameter space of the H\'enon family. As this conjecture makes use of certain automorphisms of $\Sigma_2$ called the compound marker automorphisms, a modern statement of Hubbard's conjecture is that $\Gamma$ is generated by these compound marker automorphisms.

The main result of this paper \Cref{mainthm} concerns the behaviour of these marker automorphisms, and consists of a criterion for a single marker endomorphism to be bijective. In a sense, the result obtained is that the single marker automorphisms are quite simple applications. Unfortunately, generalising this kind of criterion to compound marker endomorphisms turns out to be complicated. Another direction of our study stated in \Cref{secthm} is to use Hedlund's results~\cite{Hedlund} to investigate whether or not the compound marker automorphisms generate $\mathrm{Aut}(\Sigma_2)$.

This paper begins with some definitions and examples in \Cref{Definitions}. Then, \Cref{Markers} gives our criterion for a single marker endomorphism to be bijective, which turns out to be equivalent to be an involution. Using this result, \Cref{Consequences} gives some properties of the single marker endomorphisms. \Cref{Compound} explains why compound marker endomorphisms are much more difficult to study, and \Cref{Permutive} rephrase them in terms of permutive functions which have been studied by Hedlund. Finally, \Crefname{section}{Appendix}{Appendices}\Cref{Calculations}\Crefname{section}{Section}{Sections} explains an algorithm of Arai which can be used to calculate the monodromy action of a loop thanks to interval arithmetic. Some movies and pictures about visualization of the monodromy action can be found at \url{https://perso.crans.org/ivan/}.

\newpage

\section{Definitions and examples}
\label{Definitions}

This section introduces the main definitions and conjectures which will be of interest for us.

\begin{dfn}[Single marker endomorphism]
    Let $S$ be a finite word on $\{A,B,*\}$, containing exactly one $*$. The associated \emph{single marker endomorphism}, $\phi$, is constructed as follows: \\

    For $(c_i)_i \in \{A,B\}^{\mathbb{Z}}$, if for some slide $c_n,...,c_{n+j}$ matches $S$, where the $*$ can stand for a $A$ or a $B$, then $\phi((c_i)_i)$ is a copy of $(c_i)_i$ where the characters matching a $*$ are swapped. This operation is done at all the positions matching a $*$ at the same time.\\

     If the endomorphism is bijective, we call it a \emph{single marker automorphism}.
\end{dfn}

For example, if $S = B*BAA$ and $c = \overline{BABAA} = ...BABAABABAABABAA...$. Then, $\phi(c) = \overline{BBBAA}$. As the construction of the endomorphism only depends on the relative position of the letters, we obtain a continuous shift commuting function.

An example of a marker endomorphism being non bijective is $A*BAA$, which takes $\overline{BAA}$ to $\overline{BAB}$, and $\overline{BAB}$ to itself. Hence, it can't be injective. We will establish that $B*BAA$ is an automorphism later on.

In Lipa's conjecture, the following definition, generalising the single marker endomorphisms, appears:

\begin{dfn}[Compound marker endomorphism]
    Let $S_1, ..., S_k$ be some words on $\{A,B,*\}$, containing exactly one $*$. The associated \emph{compound marker endomorphism} $\phi$ is constructed as follows: \\

    For $(c_i)_i \in \{A,B\}^{\mathbb{Z}}$, if for some slide $c_n,...,c_{n+j}$ matches one of the $S_l$, where the $*$ can stand for a $A$ or a $B$, then $\phi((c_i)_i)$ is a copy of $(c_i)_i$ where the characters matching a $*$ are swapped. This operation is done at all the positions matching a $*$ at the same time.\\

     If the endomorphism is bijective, we call it a \emph{compound marker automorphism}.
\end{dfn}

For example, if $S_1 = A*BAA$, $S_2 = A*BABBA$, and $c = \overline{BAA}$, we have $\phi(c) = \overline{BAB}$, and $\phi^2 (c) = \overline{BAA}$.

The conjecture of Lipa~\cite{Lipa} tries to link the geometric position of a loop, expressed in terms of sheets and wakes, to its monodromy action. Roughly, the sheets are obtained by looking at the escape rate of the different critical points of the forward potential function. These critical points can be labelled in terms of finite sequences of $A$ and $B$. Thus, if for a value of $(b,c) \in \mathbb{C}^{\times} \times \mathbb{C}$, the $x$ critical point of the associated forward potential function does not escape, then $(b,c)$ is on the $x$ sheet. This gives a description of the parameter space as a superposition of different layers. At the same time, the wakes aims to divide the parameter space. Their use is inspired by the theory of external rays for the quadratic family. A wake $W$ gathers all the points having the same orbit portrait. By looking at the shorter angular sector we can also associate a period $p(W) \in \mathbb{N}^{\times}$, and a coding $K(W) \in \{A,B\}^{p(W)}$ to the wake $W$. A conspicuous sub-wake of $W$ is a wake $W'$ that surrounds $W$, is of higher degree, and such that no wake of period between $p(W)$ and $p(W')$ can be found between $W$ and $W'$. More details about sheets and wakes can be found in \cite{Richards}. This vocabulary allows us to formulate Lipa's conjecture as follows :

\begin{cjt}[Lipa's conjecture]
    Given a wake $W_1$ with conspicuous sub-wakes $W_2,...,W_n$, if $\gamma \in \pi_1(\mathcal{H}, (b_0, c_0))$ winds around the herd on the $x$ sheet of the wake $W_1$, then $\rho(\gamma)$ is the following compound marker automorphism :
    \begin{equation*}
        x * K(W_1), \text{ } x*K(W_2), \text{ } ... \text{ , } x*K(W_n)
    \end{equation*}
\end{cjt}

For example, the loop of \Cref{figure:loop} wraps around the herd on the $B$ sheet of the $BAA$ wake. Hence, the expected monodromy action is $B*BAA$, and the algorithm of \Crefname{section}{Appendix}{Appendices}\Cref{Calculations}\Crefname{section}{Section}{Sections} supports it. One remark is that Lipa's conjecture give the same monodromy action for a loop and for its inverse. Thus, we expect a lot of marker automorphisms to be involutions. This is one of the main motivations for \Cref{Markers}.

\begin{figure}[H]
\centering
\includegraphics[width=0.6\linewidth]{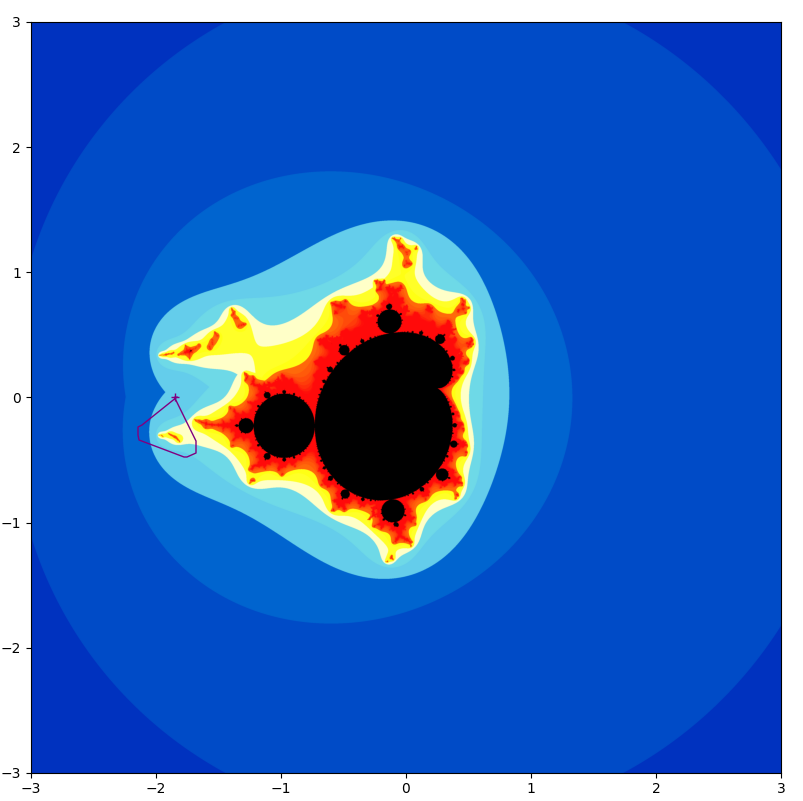}
\caption{Loop around an herd of the $B$ sheet and $BAA$ wake at $b = 0.15i$}
\label{figure:loop}
\end{figure}

Another question that arises in the study of the monodromy action is to determine the image $\Gamma = \rho(\pi_1(\mathcal{H},(b_0,c_0)))$. Initially, Hubbard suggested that $\Gamma = \mathrm{Aut} (\Sigma_2)$ in analogy with the surjectivity result in one dimension of \cite{Blanchard}. However, recent considerations suggest that the shift map may not belong to $\Gamma$. Hence, the following conjecture seems more realistic.

\begin{cjt}[Hubbard's conjecture]
Is $\mathrm{Aut} (\Sigma_2)$ generated by $\Gamma$ and the shift map $\sigma$?
\end{cjt}

A reasonable way to tackle Hubbard's conjecture could be to split the problem into the following two questions.
\begin{qtn}
Do all the compound marker automorphisms belong $\Gamma$ ?
\end{qtn}
This point motivates \Cref{Markers}, \Cref{Consequences} and \Cref{Compound}, but seems out of reach for now, as more work on Lipa's conjecture and compound marker endomorphisms would be useful. Assuming a positive answer to the first question, the second one, purely combinatorial, could be
\begin{qtn}
Is $\mathrm{Aut}(\Sigma_2)$ generated by the compound marker automorphisms and the shift ?
\end{qtn}
This point motivates \Cref{Permutive}, and seems more achievable.

\section{Saturated marker endomorphisms}
\label{Markers}

This section focuses on the study of single marker endomorphisms, as the compound case is more complicated. In particular, it introduces the notion of saturated marker endomorphism and proves that being saturated is equivalent to being bijective.\\

The following definition is motivated by the observation that when a marker endomorphism can swap two letters covered by a single word $S$, it has a very complicated behaviour. At first, this criterion was invented to check if a single marker endomorphism was an involution or not.

\begin{dfn}[Saturated marker endomorphism]
    Let $(\alpha_i)_{i \in \llbracket 1,n\rrbracket}$ and $(\omega_j)_{j \in \llbracket 1,m\rrbracket}$ be two finite words on $\{A,B\}$, and $\phi$ the single marker endomorphism generated by $\alpha * \omega$. We say that $\phi$ is \emph{saturated} if the two following points are true :

    \begin{itemize}
        \item For any $i \in \llbracket 1,n\rrbracket$, one of the following conditions is true :
        \begin{itemize}
            \item $\omega$ is not a prefix of $\alpha [i+1,n]*\omega$, where the $*$ can be replaced by $A$ or $B$.
            \item $\alpha[1,i-1]$ is not a suffix of $\alpha$.
        \end{itemize}
        \item For any $j \in \llbracket 1,m \rrbracket$, one of the following conditions is true :
        \begin{itemize}
            \item $\alpha$ is not a suffix of $\alpha * \omega [1,j-1]$, where $*$ can be replaced by $A$ or $B$.
            \item $\omega [j+1,m]$ is not a prefix of $\omega$.
        \end{itemize}
    \end{itemize}
\end{dfn}

\begin{rmk}
    The same notion is expressed in terms of overlaps in \cite{Hedlund} or \cite{Boyle}. It is used in a broader context to construct bijective marker endomorphisms, and some statements similar to the first implication of the following theorem can be found.
\end{rmk}

We use the convention that $\alpha[n+1,n]$ and $\alpha[1,0]$ are the empty words. The following theorem shows that single marker automorphisms are in fact involutions. This is quite reassuring regarding the first conjecture of Lipa. Moreover, as being saturated is easily checkable, the theorem is quite convenient.

\begin{thm}
    \label{mainthm}
    Let $(\alpha_i)_{i \in \llbracket 1,n\rrbracket}$ and $(\omega_j)_{j \in \llbracket 1,m\rrbracket}$ be two finite words on $\{A,B\}$, and $\phi$ the single marker endomorphism generated by $\alpha * \omega$. The following statements are equivalent :
    
    \begin{itemize}
        \item $\phi$ is saturated
        \item $\phi$ is an involution
        \item $\phi$ is bijective
    \end{itemize}
\end{thm}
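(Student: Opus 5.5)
We prove the cycle of implications: saturated $\Rightarrow$ involution $\Rightarrow$ bijective $\Rightarrow$ saturated. The middle implication is trivial, since $\phi\circ\phi=\mathrm{id}$ gives an inverse. The work lies in the two outer implications.

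First we unpack the definition. Each condition in the first bullet concerns an index $i\in\llbracket 1,n\rrbracket$. It says that there is no way to place a second occurrence of $\alpha*\omega$ whose $*$ sits at position $i$ of $\alpha$ in the first occurrence. Indeed, such a placement requires $\alpha[1,i-1]$ to be a suffix of $\alpha$ and $\omega$ to be a prefix of $\alpha[i+1,n]*\omega$, with the first occurrence's $*$ free. The second bullet is the symmetric statement for a second $*$ landing inside $\omega$. So saturation means exactly this: in any sequence, no window matching $\alpha*\omega$ contains, at a non-$*$ position, the $*$ of another matching window. Overlapping windows whose stars coincide are harmless, since both match at the same $*$.

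\emph{Saturated $\Rightarrow$ involution.} Let $c$ be a sequence and let $P$ be the set of positions $k$ such that the window centered at $k$ matches $\alpha*\omega$. Since the $*$ is a wildcard, flipping $c_k$ does not affect whether the window at $k$ matches. By saturation, no $k'\in P$ with $k'\neq k$ lies inside the window at $k$ outside the star slot. Hence flipping all letters at positions of $P$ changes no letter read by any window at a position of $P$, except that window's own star. So each window at $k\in P$ still matches in $\phi(c)$.

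We must also show that no new matches are created. Suppose the window at some $k\notin P$ matches in $\phi(c)$. Its letters differ from those of $c$ only at positions of $P$ lying in that window. If such a $p\in P$ falls at a non-star slot of the window at $k$, then in $\phi(c)$ the window at $k$ matches and contains $p$, while the window at $p$ also matches in $\phi(c)$. This contradicts saturation, because saturation is a property of the word $\alpha*\omega$ and therefore applies to any sequence, in particular to $\phi(c)$. So no such $p$ exists, the window at $k$ reads the same letters in $c$, and $k\in P$, a contradiction. Therefore the match set of $\phi(c)$ equals $P$, and applying $\phi$ again flips the same positions back: $\phi^2=\mathrm{id}$.

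\emph{Bijective $\Rightarrow$ saturated.} We argue by contraposition. Assume saturation fails, say for some $i$ in the first bullet; the second bullet is symmetric by reversing words. Then there is a finite word $w$ containing two matching windows $W_1$ and $W_2$, where the star of $W_2$ falls at position $i$ of $\alpha$ in $W_1$. Choose the letter at the star of $W_2$ so that $W_1$ matches. Flipping that letter destroys the match of $W_1$.

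The plan is to build two distinct periodic points with the same image, imitating the example $A*BAA$, where $\overline{BAA}$ and $\overline{BAB}$ both map to $\overline{BAB}$. Take a periodic sequence built from $w$, padded with a separator so that no unintended matches appear. Compare it with the sequence obtained by flipping the star of $W_2$. In one of them $W_1$ matches and in the other it does not, and we arrange the images to coincide.

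A cleaner route to non-injectivity avoids explicit collisions. On periodic points of each fixed period, $\phi$ acts on a finite set. Injectivity there is equivalent to bijectivity, so it suffices to exhibit a periodic point that is not in the image, or two points with equal images.

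The main obstacle is controlling the spurious matches created by the periodization or padding, together with the chained interactions: flipping one star may create or destroy further windows. I would first try taking the shortest overlap, meaning the maximal $i$ violating the condition, so that the conflict configuration is minimal. I would then use the period exactly equal to the shift between $W_1$ and $W_2$. That choice makes the sequence $\sigma$-invariant up to that period, so every window behaves identically, and the computation reduces to a single finite check.
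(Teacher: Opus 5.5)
Your argument for saturated $\Rightarrow$ involution is correct. It is the paper's argument in global form: the match set $P$ is preserved by $\phi$, because a new match in $\phi(c)$ would contain, at a non-star slot, a flipped position whose own window still matches in $\phi(c)$.

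The gap is the converse, bijective $\Rightarrow$ saturated, which carries the real content of the theorem. You only outline it: no sequence is built, and no collision or missing preimage is verified. Moreover, the construction you lean towards, a point of period exactly $d$ (the shift between the two stars), may contain no match at all. For $A*B$, the only failure of saturation has $d=1$, with conflicting configuration $AABB$. No constant sequence contains $A\,?\,B$, so the $d$-periodic points are all fixed. Non-injectivity has to be seen elsewhere, e.g.\ $\phi$ sends $\overline{AABB}$ to the fixed point $\overline{AB}$. In general the obstruction occurs when each star lies in the other's window, i.e.\ $d\le\min(n,m)$, and $\alpha_{n-d+1}\neq\omega_d$. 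These are the letters forced at the two stars, and a $d$-periodic point identifies them.

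What is missing is the case distinction of the paper's two lemmas, together with a blocking argument.

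\emph{Stars in each other's windows.} Use a padded non-periodic point such as $p=\overline{\tilde\alpha_1}\,\alpha[1,n-i+1]\,\alpha\,h\,\omega\,\overline{\tilde\omega_m}$. After one step both stars are flipped, and each flipped letter sits at a non-star slot of the other's window. So neither can ever flip back, and no iterate returns to $p$. The constant tails never change, so the forward orbit is finite. Then $\phi^k(p)=\phi^l(p)$ with $1\le l<k$, which contradicts injectivity.

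\emph{Only the star of $W_2$ in the window of $W_1$.} Here padding fails. For $A*BAA$, the padded configuration $\overline{B}ABBAABAA\overline{B}$ lies on a $\phi$-cycle of length $4$: the star of $W_2$ is not blocked and flips back at the next step. One needs instead, in the first-bullet case, the $d$-periodic point $\overline{\alpha_i\omega\beta}$ with $\alpha[i+1,n]=\omega\beta$. In it every flipped $\tilde\alpha_i$ permanently blocks the next star, so the forward orbit stays in a finite set of $d$-periodic points avoiding $p$.

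Until these constructions and the blocking argument are supplied, the implication bijective $\Rightarrow$ saturated is not proved.
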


\begin{proof}
    First, let us prove that if $\phi$ is saturated then it is an involution.
    
    We suppose that $\phi$ is saturated. Let us fix $c = (c_k)_{k \in \mathbb{Z}} \in \{A,B\}^\mathbb{Z}$, and $l \in \mathbb{Z}$.
    
    If $\phi(c)_l \neq c_l$, then by the definition of $\phi$, we have $c[l-n,l+m] = \alpha c_l \omega$. Thus, as $\phi$ is saturated, for $i \in  \llbracket 1,n\rrbracket$, $\alpha_i$ cannot be swapped by $\phi$, and for $j \in \llbracket 1,m \rrbracket$, $\omega_j$ cannot either. That gives $\phi(c)[l-n,l+m] = \alpha \tilde{c_l} \omega$, and $\phi^2(c)_l = c_l$, where $\tilde{c_l} = B$ if $c_l = A$, and the inverse either.

    If $\phi(c)_l = c_l$, and $\phi(c)[l-n,l+m] \neq \alpha c_l \omega$, we also have $\phi^2(c)_l = c_l$.

    Seeking for a contradiction, we suppose that $\phi(c)_l = c_l$, and $\phi(c)[l-n,l+m] = \alpha c_l \omega$. As $\phi(c)_l = c_l$, we know that $c[l-n,l+m] \neq \alpha c_l \omega$. Hence, one character of $c[l-n,l-1]$ or $c[l+1,l+m]$ has been exchanged by $\phi$. We denote is position by $s$. As $\phi(c)_s \neq c_s$, by saturation of $\phi$ we obtain $\phi(c)[s-n,s+m] = \alpha \tilde{c_s} \omega$. Moreover, as $\phi(c)[l-n,l+m] = \alpha c_l \omega$, if $s<l$, then $\alpha [1,n-(l-s)]$ is a suffix of $\alpha$, and $\omega$ is a prefix of $\alpha[n+2-(l-s),n]*\omega$. If $s>l$, $\omega[s-l+1,m]$ is a prefix of $\omega$, and $\alpha$ a suffix of $\alpha c_l \omega[1,s-l-1]$. In both cases we obtain a contradiction.
    
    As we can't have $\phi(c)_l = c_l$, and $\phi(c)[l-n,l+m] = \alpha c_l \omega$, and that in the other cases $\phi^2(c)_l = c_l$ for any $l \in \mathbb{Z}$, we obtained that $\phi$ is an involution.

    Finally, as the second implication is obvious, it remains to prove that being bijective implies being saturated. The two following lemmas combined give a proof of that by contraposition, which allows to concludes by circular implications.
\end{proof}

This lemma deals with an unsaturated character close enough to the $*$. The idea is that we can construct a word such that when applying $\phi$ we swap the character under the $*$ and the one under the unsaturated character which will then be blocking each other to go back to their original state. Thanks to that, we achieve to contradict the injectivity of $\phi$.

\begin{lmm}
    Let $\alpha \in \{A,B\}^n$ and $\omega \in \{A,B\}^m$, with $n,m \in \mathbb{N}$, and let $\phi$ be the endomorphism generated by $\alpha * \omega$.
    \begin{itemize}
        \item If there is $i \in \llbracket 1,n \rrbracket$ verifying $i \ge n-m +1$, and $\alpha _i$ not saturated, then $\phi$ is not bijective.
        
        \item If there is $j \in \llbracket 1,m \rrbracket$ verifying $j \le n$, and $\omega _i$ not saturated, then $\phi$ is not bijective.
    \end{itemize}
\end{lmm}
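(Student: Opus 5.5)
The plan is to exhibit, for a non-saturated letter close enough to the $*$, two distinct sequences with the same image under $\phi$. The second item follows from the first by reversing all words: reversal sends $\alpha*\omega$ to $\omega^{R}*\alpha^{R}$ and exchanges the two conditions of saturation. It also turns the hypothesis $j\le n$ into the hypothesis $i\ge n-m+1$ of the first item, and it conjugates $\phi$ by the flip of $\mathbb{Z}$, which preserves bijectivity. So I only treat $\alpha_i$.

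Assume $\alpha_i$ is not saturated, and put $d=n+1-i$, so that $1\le d\le n$; the hypothesis $i\ge n-m+1$ says exactly that $d\le m$. Unsaturation means $\alpha[1,i-1]$ is a suffix of $\alpha$ and $\omega$ is a prefix of $\alpha[i+1,n]\,x\,\omega$ for some letter $x$. Concretely, there is a finite word $W$ carrying two occurrences of the marker pattern whose stars are at positions $p$ and $p-d$:
\begin{itemize}
\item the star of the occurrence at $p-d$ sits on the letter that the occurrence at $p$ reads as $\alpha_i$;
\item since $d\le m$, the star of the occurrence at $p$ sits inside the $\omega$-part of the occurrence at $p-d$, and is read there as $\omega_d$.
\end{itemize}
Fix $c$ containing $W$ with $c_{p-d}=\alpha_i$ and $c_p=\omega_d$. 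Both occurrences match, so $\phi$ swaps both letters. Let $c'$ be $c$ with these two letters swapped. In $c'$ each swapped letter destroys the \emph{other} occurrence, so neither marker matches in $c'$ and $\phi$ does not touch positions $p,p-d$. Hence $\phi(c)$ and $\phi(c')$ agree at $p$ and $p-d$: both carry the swapped letters. This is the "mutual blocking" described before the lemma.

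It remains to make $\phi(c)$ and $\phi(c')$ agree everywhere, which requires controlling every other position $k$ whose window $[k-n,k+m]$ meets $\{p-d,p\}$. I would work with $N$-periodic points, where $N$ is large and one copy of $W$ is placed per period with filler in between. Since $\phi$ commutes with $\sigma$, it preserves the finite set $\mathrm{Fix}(\sigma^N)$, and bijectivity of $\phi$ would force $\phi$ to be injective there. I would choose the filler so that the only occurrences of $\alpha*\omega$ meeting $W$ are the two intended ones. A first attempt is a long constant block, or a block of the form $\overline{\alpha_1\cdots}$ broken by letters chosen to kill every other alignment of the pattern.

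I then compare $c$ and $c'$ at all positions other than $p-d$ and $p$. They agree there, and they also agree in whether each other occurrence matches, because by construction no other occurrence passes through the two modified positions. Hence $\phi(c)=\phi(c')$ with $c\ne c'$, so $\phi$ is not injective.

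The main obstacle is this filler step: guaranteeing that no spurious marker occurrence overlaps the positions $p-d$ or $p$, in either $c$ or $c'$. If a clean explicit filler resists, the fallback is a counting argument on $\mathrm{Fix}(\sigma^N)$. I would list the finitely many overlapping alignments of $\alpha*\omega$ with $W$ and choose the letters immediately outside $W$ to falsify each of them in turn, using that each alignment other than the two intended ones requires some specific letter outside $W$ that we are free to set.
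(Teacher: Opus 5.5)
There is a genuine gap in the step where you make $\phi(c)$ and $\phi(c')$ agree away from $p-d$ and $p$. The problem is not only the positions near the ends of $W$. For every $k$ with $p-d<k<p$, the whole window $[k-n,k+m]$ lies inside $W$ and contains both modified positions. So no choice of filler, and no ``letter outside $W$ that we are free to set'', can affect whether the marker matches at $k$, in $c$ or in $c'$.

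Since you allow an arbitrary non-saturated $\alpha_i$, such an intermediate occurrence can really occur in $c$. Take $\alpha*\omega=AA*AA$ and $i=1$, so $d=2$. Then $W$ is forced to be $A^7$, which also matches at its middle position $p-1$, so $\phi(c)_{p-1}=B$. In $c'$ the word becomes $AABABAA$, nothing matches at $p-1$, and so $\phi(c)\neq\phi(c')$ whatever the filler. Hence the claim that the two intended occurrences can be made the only ones meeting $W$ is false in general. The concluding claim that ``no other occurrence passes through the two modified positions'' is also unsupported for $c'$, since $c'$ could a priori create new intermediate occurrences.

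The paper avoids this in two ways. First, it takes $i$ maximal. An occurrence at some $k\in(p-d,p)$ in $c$ would witness that $\alpha_{i'}$ is non-saturated for $i'=n+1-(k-p+d)>i$, contradicting maximality. Second, it never compares $c$ with $c'$. It iterates $\phi$ on the constructed point: the two swapped letters block each other forever, and positions outside $(p-d,p)$ never change. So the forward orbit is finite and never returns to the starting point, and pigeonhole gives two distinct iterates with equal images. This never requires knowing what happens at intermediate positions after the first step.

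If you want to keep the direct comparison of $c$ and $c'$, then after taking $i$ maximal you still owe an argument that no intermediate occurrence appears in $c'$. This does hold:
\begin{itemize}
\item $c[p-d-n,p-1]$ is $d$-periodic, because two copies of $\alpha$ sit there at offset $d$.
\item The $\alpha$-slot of any $k\in(p-d,p)$ contains a length-$d$ subwindow through $p-d$ that avoids $p$.
\item In $c'$, the letter count of that subwindow is off by one from that of every length-$d$ factor of the $d$-periodic word $\alpha$, so the slot cannot read $\alpha$.
\end{itemize}
The filler you worried about is actually easy: put $\overline{\tilde{\alpha_1}}$ to the left and $\overline{\tilde{\omega_m}}$ to the right of $W$. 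This kills every window leaving $W$, in both $c$ and $c'$, because such a window reads a filler letter in the slot of $\alpha_1$ or $\omega_m$.

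Your reduction of the second item to the first by reversal is correct.
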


\begin{proof}
    We write $\alpha = \alpha_1 ... \alpha_n$ and $\omega = \omega_1 ... \omega_m$. For the first point, we take $i \in \llbracket 1,n \rrbracket$ verifying $i \ge n-m +1$, $\alpha _i$ not saturated, and maximal. Then, we construct $p \in \{A,B\}^{\mathbb{Z}}$ as follow. We start with $\alpha[1,n-i+1]\alpha h \omega$, with $h \in \{A,B\}$ such that $\omega$ is a prefix of $\alpha[i+1,n]h\omega$. Then, we add $\overline{\tilde{\alpha_1}}$ before and $\overline{\tilde{\omega_m}}$ at then end. That gives $p = \overline{\tilde{\alpha_1}} \alpha[1,n-i+1]\alpha h \omega \overline{\tilde{\omega_m}}$.

    The following shows that only the non saturated $\alpha_i$ and $h$ are being swapped by $\phi$. First, the consecutive $\tilde{\alpha_1}$ cannot be swapped as they cannot by preceded by $\alpha_1 ... \alpha_n$. The same goes for the $n$ first letters of $\alpha[1,n-i+1]\alpha$. Then, there is the non saturated $\alpha_i$ which is preceded by $\alpha$, and followed by $\omega$. The following $\alpha_{i+1}...\alpha_n$ cannot be swapped as $i$ has been taken maximal. Then, the $h$ is swapped as it is surrounded by $\alpha$ and $\omega$. For the tail, the presence of the consecutive $\tilde{\omega_n}$ make swapping impossible. We obtained that $\phi(p) = \overline{\alpha_1} \alpha[1,n-i+1]\alpha[1,i-1] \tilde{\alpha_i} \alpha[i+1,n] \tilde{h} \omega \overline{\omega_m}$.

    Now, as $i \ge n-m +1$, the $\tilde{h}$ impeach $\tilde{\alpha_i}$ to be swapped while $\tilde{\alpha_i}$ also blocks the $\tilde{h}$. Moreover, the $\overline{\alpha_1} \alpha[1,n-i+1]\alpha[1,i-1]$ and $ \omega \overline{\omega_m}$ parts are still saturated, and hence cannot be swapped. Thus, the orbit of $\phi$ can only swap $\alpha[i+1,n]$, and is thus finite. As $p \not\in \{\phi^k (p) \textbf{, } k \in \mathbb{N} \}$, it exists a minimal $k \in \mathbb{N}^{\times}$ such that $\phi^k (p) = \phi^l (p)$ with $l < k$ and $l \ge 1$, as the iterates of $p$ differ from $p$. If $l > 1$, by minimality of $k$, we have $\phi^{l-1} (p) \neq \phi^{k-1} (p)$, and $\phi^l (p) = \phi^k (p)$, $\phi$ is not injective. If $l = 1$, we also have $p \neq \phi^{k-1}(p)$ and $\phi(p) = \phi^k(p)$, and $\phi$ cannot be bijective. 

    For the second point, the reasoning is the same. We take $j \in \llbracket 1,m \rrbracket$ minimal verifying $j \le n$, and $\omega _j$ not saturated. Then, we consider the word $p = \overline{\tilde{\alpha_1}} \alpha h \omega \omega[m-j+1, m] \overline{\tilde{\omega _m}}$, where $h \in \{A,B\}$ allows to swap $\omega_j$. Again, as $j \le n$, the $\tilde{\omega_j}$ and the $\tilde{h}$ are blocking each other, and we have $\phi^k (p) = \phi^l(p)$, with $\phi^{k-1}(p) \neq \phi^{l-1}(p)$ for some $k,l \in \mathbb{N}^{\times}$. Thus, $\phi$ is not bijective.
\end{proof}

When the unsaturated character is far from the $*$, it gives some self similarity to the marker endomorphism. By making use of it, we achieve to use a technique similar to the one before.

\begin{lmm}
    \label{prop : far*}
    Given $\alpha \in \{A,B\}^n$ and $\omega \in \{A,B\}^m$, with $n,m \in \mathbb{N}$, and $\phi$ the endomorphism generated by $\alpha * \omega$.
    \begin{itemize}
        \item If there is $i \in \llbracket 1,n\rrbracket$, verifying $i < n-m+1$ and $\alpha_j$ not saturated, then $\phi$ is not bijective.
        \item If there is $j \in \llbracket 1,m\rrbracket$, verifying $j >n$ and $\omega_j$ not saturated, then $\phi$ is not bijective.
    \end{itemize}
\end{lmm}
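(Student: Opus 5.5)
The plan is to exploit the self-similarity forced by a far-away unsaturated letter: build two distinct periodic sequences with the same image under $\phi$. I treat the first point; the second is the mirror image, obtained by reversing words and exchanging the roles of $\alpha$ and $\omega$.

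\emph{Structure forced by the hypothesis.} Suppose $\alpha_i$ is not saturated with $i<n-m+1$, and set $d=n-i+1$. Then $\alpha[1,i-1]$ is a suffix of $\alpha$, which says exactly that $\alpha$ has period $d$. Also $\omega$ is a prefix of $\alpha[i+1,n]*\omega$ for some choice $h$ of the $*$. Since $n-i\ge m$, $\omega$ fits entirely inside $\alpha[i+1,n]$, so $\omega=\alpha[i+1,i+m]$ and the choice of $h$ is irrelevant. Consequently the periodic sequence $p=\overline{\alpha[i,n]}$, whose block has length $d$, has the following property: at every position $\equiv 0 \pmod d$, which I call a \emph{star position} and which carries $\alpha_i$, the $n$ letters to the left read $\alpha$ and the $m$ letters to the right read $\omega$. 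The left reading uses the period $d$ of $\alpha$; the right reading uses $\omega=\alpha[i+1,i+m]$.

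\emph{The two preimages.} Let $q$ be the sequence obtained from $p$ by replacing $\alpha_i$ with $\tilde{\alpha_i}$ at every star position. Because $d\le n$, each star position lies in the left window of the next star position. Because $d>m$, it does not lie in any star position's right window.
\begin{itemize}
\item In $p$, every star position sees its marker, so $\phi$ flips all of them.
\item In $q$, every star position has $\tilde{\alpha_i}$ at distance $d$ on its left, where $\alpha$ requires $\alpha_i$. Hence no star position of $q$ is flipped.
\end{itemize}
So at the star positions, $\phi(p)$ and $\phi(q)$ agree: both carry $\tilde{\alpha_i}$ everywhere. In the ``chain'' picture this is the non-injective affine rule $y_t\mapsto y_t+[\,y_{t-1}=\alpha_i\,]$: the all-$\alpha_i$ chain and the all-$\tilde{\alpha_i}$ chain have the same image. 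Since $p\neq q$, once $\phi(p)=\phi(q)$ is established $\phi$ is not injective, hence not bijective.

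\emph{Main obstacle: the non-star positions.} The remaining step, which I expect to be the hardest, is to show that the non-star positions, which carry $\alpha[i+1,n]$ in both $p$ and $q$, are treated identically by $\phi$. Either they are flipped in neither sequence, or in exactly the same way in both. My first attempt is to choose $i$ extremal among the far unsaturated indices, in the same spirit as the maximal choice in the previous lemma, and then argue as follows. A marker occurrence centred at a non-star position of $p$ or $q$ would, through the period $d$, produce either a smaller period of $\alpha$ or an overlap of $\alpha*\omega$ with itself. Such a configuration is either
\begin{itemize}
\item excluded by the extremal choice of $i$, or
\item a near unsaturated letter, in which case the previous lemma already gives non-bijectivity.
\end{itemize}
If this bookkeeping becomes unwieldy, the fallback is to replace exact equality by a coarser claim: both $p$ and $q$ are $\sigma^d$-invariant, and $\phi$ restricted to $\sigma^d$-periodic points is a map of a finite set. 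Then it suffices to show that $\phi$ is not a permutation of that finite set. The star-position computation above (an explicit collision, or at least a failure of counting) is exactly the input needed for this.
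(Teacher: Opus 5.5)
There is a genuine gap, and it is exactly the step you flag: you never settle what $\phi$ does off the star positions, and the sketch does not settle it either.

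For $p$, your overlap idea works, but only if $i$ is chosen \emph{maximal} among the far unsaturated indices. Suppose a non-star position $t$ of $p$ is a marker centre. The first star position $s>t$ satisfies $0<s-t<d\le n$ and is also a marker centre. So $\alpha_{n+1-(s-t)}$ is unsaturated with index larger than $i$, hence near, and the previous lemma applies.

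For $q$, the overlap idea has nothing to work with, since no star position of $q$ is a marker centre. The correct reason is that $\alpha$ does not occur in $q$ at all. Every length-$d$ factor of $\alpha$ is a cyclic rotation of $\alpha[i,n]$, because $\alpha$ has period $d$. Every length-$d$ factor of $q$ is a rotation of $\tilde{\alpha_i}\alpha[i+1,n]$. These two words have different numbers of $A$'s.

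Only with both facts, plus the case split through the previous lemma, do you get $\phi(p)=q=\phi(q)$. The case split is not bookkeeping. For $AAAA*A$, each $i\in\{1,2,3\}$ is far unsaturated, yet $\phi(p)=\overline{B}$ while $\phi(q)=q$. So the exact collision fails for every choice of $i$.

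Your fallback is the paper's route, but the sentence that makes it work is missing, even though it is already contained in your star computation. That computation uses nothing about $q$ except that it is $\sigma^d$-periodic with $\tilde{\alpha_i}$ at every star position. So the finite set $X$ of all such sequences satisfies $\phi(X)\subseteq X$. Moreover, $\phi(p)\in X$ and $p\notin X$. If $\phi$ were injective, then $\phi(X)=X$, so $\phi(p)=\phi(x)$ for some $x\in X$ with $x\neq p$, a contradiction.

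The paper runs the same idea on the forward orbit of $p$. All $\phi^k(p)$ with $k\ge 1$ lie in $X$, so the orbit eventually cycles without ever returning to $p$. The first repetition then yields two distinct points with the same image.

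This argument needs no analysis of non-star positions, no extremal choice of $i$, and no appeal to the previous lemma. Your mirror reduction for the $\omega$ case is fine.
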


\begin{proof}
    For the first point. We have $\alpha_i$ not saturated with $i < n-m+1$, then $\omega$ has to be a prefix of $\alpha[i+1,n]*\omega$, but $\alpha[i+1,n]$ has a length greater that $m$, and thus we can factorize $\alpha[i+1,n] = \omega \beta$ with $\beta \in \{A,B\}^{n-i-m}$. That gives $\alpha * \omega = \alpha[1,i-1] \alpha_i \omega \beta * \omega$. One problem that we could face is $\alpha[1,i-1]$ being having an unknown shape. With that in mind, we take $i \in \llbracket 1,n \rrbracket$ minimal verifying the hypothesis of the proposition. As $\alpha_i$ is not saturated, $\alpha[1,i-1]$ is a suffix of $\alpha$. But we have shown that $\alpha$ ends with $\omega \beta$. If $\alpha[1,i-1]$ is larger that $\omega\beta$, then its shape is $\mu\alpha_i \omega \beta$ with $\mu$ a finite word on $\{A,B\}$. But as $\alpha[1,i-1]$ is a suffix of $\alpha$, in $\mu\alpha_i \omega \beta$ the $\alpha_i$ cannot be saturated, which contradicts the minimality of $i$. Thus, $\alpha[1,i-1]$ is a suffix of $\omega\beta$. Now, we define $p = \overline{\alpha_i \omega \beta}$. As $p$ is periodic, by symmetry, its iterates under $\phi$ are also periodic with the same period. At the first iterate $\phi(p) = \overline{\tilde{\alpha_i} \hat{\omega} \hat{\beta}}$, where $\hat{\beta} \in \{A,B\}^{n-i-m}$ and $\hat{\omega} \in \{A,B\}^{m}$. Then, the $\tilde{\alpha_i}$ at the beginning of each pattern cannot be swapped as it is not preceded by $\alpha$, because of the previous $\tilde{\alpha_i}$. Thus, we have $\{ \phi^k (p) \text{, } k \in \mathbb{N}^{\times}\} \subset \{\overline{\tilde{\alpha_i} \hat{\omega} \hat{\beta}} \text{, } \hat{\omega} \in \{A,B\}^{m} \text{ and } \hat{\beta} \in \{A,B\}^{n-i-m} \}$. The last set being finite, it exists a minimal $k \in \mathbb{N}^{\times}$, $k > 1$ such that $\phi^k (p) = \phi^l (p)$ with $l < k$ and $l \ge 1$, as none iterate of p can start with a $\alpha _i$. By minimality of $k$, we have $\phi^{l-1} (p) \neq \phi^{k-1} (p)$, and as $\phi^l (p) = \phi^k (p)$, $\phi$ is not bijective.

    For the second point, the proof is the same. As $\omega_j$ is not saturated and $j>n$, then we can factorize $\omega = \beta \alpha \omega_j \gamma$, with $\beta \in \{A,B\}^{j-n-1}$, $\gamma \in \{A,B\}^{m-j}$. Taking $j \in \llbracket 1,m \rrbracket$ maximal with the required property, gives that $\gamma$ is a prefix of $\beta \alpha$. Then, we define $p = \overline{\beta \alpha \omega_j}$. In $\phi(p)$, the last $\omega_j$ are swapped, but cannot be swapped again as they are blocking each other. As the consecutive images of $p$ are periodic with same period, and the initial state $p$ cannot be reached again by iterating, it exists $k \in \mathbb{N}^{\times}$, such that $\phi(p)^k$ is the image of two distinct sequences. Thus $\phi$ cannot be injective.
\end{proof}

\section{Consequences for H\'enon maps}
\label{Consequences}

This section gathers some direct consequences of \Cref{mainthm}. This first result deals with the single marker endomorphisms with no tail or head.

\begin{prp}
    \label{prop:sw}
    Given a finite word $\omega \in  \{A,B\}^{m}$ for $m \in \mathbb{N}^{\times}$, the marker endomorphisms $*\omega$ and $\omega *$ are not bijective.
\end{prp}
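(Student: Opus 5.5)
By \Cref{mainthm}, it suffices to show that the marker endomorphisms $*\omega$ and $\omega *$ are not saturated. We treat $*\omega$ first. Here $\alpha$ is the empty word ($n=0$), so the first saturation condition is vacuous and only the second one matters. We must exhibit some $j \in \llbracket 1,m\rrbracket$ for which both alternatives fail:
\begin{itemize}
    \item $\alpha$ is a suffix of $\alpha * \omega[1,j-1]$;
    \item $\omega[j+1,m]$ is a prefix of $\omega$.
\end{itemize}
Since $\alpha$ is empty, the first alternative fails for every $j$, because the empty word is a suffix of any word. So we only need some $j$ with $\omega[j+1,m]$ a prefix of $\omega$. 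Take $j = m$. By the convention stated before \Cref{mainthm}, $\omega[m+1,m]$ is the empty word, which is a prefix of $\omega$. Hence $\omega_m$ is not saturated, and $*\omega$ is not saturated.

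The case $\omega *$ is symmetric, with $\omega$ now playing the role of $\alpha$ and the right word empty. The second saturation condition is vacuous, and we check the first condition at $i=1$. The word $\alpha[1,0]$ is empty, hence a suffix of $\alpha$. Also, the empty right word is trivially a prefix of $\alpha[2,n]*$ with any letter substituted. So $\alpha_1$ is not saturated.

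The implication (bijective $\Rightarrow$ saturated) of \Cref{mainthm} then gives non-bijectivity in both cases. The only step needing care is whether \Cref{mainthm} and its lemmas really cover the degenerate case where one side is empty. The relevant lemma is the one with $j\le n$ / $i\ge n-m+1$ for $\omega *$ with $m=0$, and \Cref{prop : far*} with $j>n=0$ for $*\omega$.

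If one prefers to avoid that subtlety, a direct witness is easy. For $\omega*$, map $\overline{\tilde{\omega_1}}\,\omega\,x\,\overline{\tilde{\omega_1}}$ for both values of $x$. The constant tails block further matches only if they avoid $\omega$, so this needs checking. A cleaner alternative is to compare the fixed points and periodic points of small period, or to count preimages of $\overline{\omega\,\tilde{x}}$-type periodic sequences. I would simply rely on \Cref{mainthm}, mentioning that its proof does not use $n,m\ge1$.
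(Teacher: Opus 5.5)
Your argument is correct and is exactly the paper's proof: via the bijective $\Rightarrow$ saturated direction of \Cref{mainthm}, the last letter of $\omega$ is unsaturated in $*\omega$, and the first letter is unsaturated in $\omega*$. Two slips in side remarks you do not rely on: with an empty right word, $i\ge n-m+1=n+1$ never holds, so both degenerate cases fall under \Cref{prop : far*}, whose argument works unchanged with $p=\overline{\omega}$; and your ``direct witness'' for $\omega*$ cannot show non-injectivity, since the two images differ at the position of $x$.
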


\begin{proof}
    For $*\omega$ the last letter of $\omega$ is not saturated. For $\omega *$ the first one is not.
\end{proof}

The following result is more interesting. Using Lipa's conjectures vocabulary, it means that given a wake we can always find a sheet where a loop could generate an automorphism.

\begin{prp}
    \label{prop : complete}
    Given a finite word $\omega \in  \{A,B\}^{m}$ for $m \in \mathbb{N}^{\times}$, there exist $n \in \mathbb{N}$ and $\alpha \in  \{A,B\}^{n}$ such that $\alpha*\omega$ is an automorphism.
\end{prp}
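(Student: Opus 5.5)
The plan is to reduce the statement to a combinatorial construction via \Cref{mainthm}. Since being saturated is equivalent to being bijective, it suffices to exhibit, for every $\omega$, a word $\alpha$ such that $\alpha*\omega$ is saturated. We then only need to check the two lists of overlap conditions in the definition.

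\textbf{Choice of $\alpha$.} Write $x=\tilde{\omega_1}$ for the letter different from $\omega_1$, and $y=\omega_1$. Take $n=m+2$ and
\[
\alpha = y\,x^{n-1}.
\]
The two features we exploit are that $\alpha$ begins with a letter different from all its other letters, and that all its other letters differ from $\omega_1$.

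\textbf{Conditions on the $\alpha_i$.} Fix $i\in\llbracket 1,n\rrbracket$ and split into two cases.
\begin{itemize}
\item If $2\le i\le n$, then $\alpha[1,i-1]=y\,x^{i-2}$ is nonempty and starts with $y$. The only suffix of $\alpha$ starting with $y$ is $\alpha$ itself, which has length $n>i-1$. Hence $\alpha[1,i-1]$ is not a suffix of $\alpha$, and the second alternative holds.
\item If $i=1$, then $\alpha[1,0]$ is empty and is a suffix, so we must use the first alternative. The word $\alpha[2,n]h\omega=x^{n-1}h\omega$ starts with $x\neq\omega_1$ because $n\ge 2$. Hence $\omega$ is not a prefix of it, for either value of $h$.
\end{itemize}

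\textbf{Conditions on the $\omega_j$.} Fix $j\in\llbracket 1,m\rrbracket$ and $h\in\{A,B\}$. The word $\alpha h\omega[1,j-1]$ has length $n+j$, and its suffix of length $n$ is $x^{n-j}h\,\omega[1,j-1]$. Since $j\le m<n$, this suffix begins with $x$, whereas $\alpha$ begins with $y\neq x$. So $\alpha$ is not a suffix of $\alpha h\omega[1,j-1]$, and the first alternative holds.

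Hence $\alpha*\omega$ is saturated, and by \Cref{mainthm} it is an automorphism (indeed an involution).

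There is no serious obstacle; the only delicate point is the choice of $\alpha$. The naive choice of a constant $\alpha$ fails when $\omega$ is itself constant. This is why $\alpha$ needs a distinguished first letter, and why $n$ must exceed $m$, so that the $y$ never falls inside the window compared in the $\omega$-conditions.
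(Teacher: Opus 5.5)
Your proof is correct. It shares the paper's overall strategy: invoke the implication ``saturated $\Rightarrow$ involution'' of \Cref{mainthm} and exhibit an explicit saturated head. The difference is the choice of witness.

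The paper uses the head $\tilde{\omega_1}\,\omega_1^{m}\,\omega_1\tilde{\omega_1}$ of length $m+3$. This word is bordered: its first letter $\tilde{\omega_1}$ is also a suffix. So for $i=1$ and $i=2$ the suffix test is useless, and the paper must use the prefix test instead, checking that $\omega$ is not a prefix of a word beginning with $\omega_1^{m}$. That is exactly where it needs $\omega$ to be non-constant. The constant words are then handled separately by $AB*A^m$ and $BA*B^m$; note that the sentence ``is not saturated either'' about $BA*\omega$ is a slip for ``is saturated as well''.

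Your head $\omega_1\tilde{\omega_1}^{\,n-1}$ avoids this. Its distinguished letter sits at the front, so no proper nonempty prefix is a suffix, and every $i\ge 2$ is settled by the suffix test. Since $\alpha_2\neq\omega_1$, the prefix test at $i=1$ fails on the first letter whatever $\omega$ is. You therefore get a single uniform construction with no case split, and a slightly shorter head.

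All your individual checks are sound. The $\omega_j$ argument only needs $n-j\ge 1$ for every $j\le m$, so $n=m+1$ would already suffice.
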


\begin{proof}
If $\omega = \overline{A}^m$, we can consider $AB*\omega$ that is saturated. If $\omega = \overline{B}^m$, then $BA*\omega$ is not saturated either.

Now, we can suppose that $\omega$ is not $\overline{A}^m$ or $\overline{B}^m$.
    
Let us write $\omega = \omega_1 ... \omega_m$. The idea is to place the word $\tilde{\omega_1}\overline{\omega_1}^{m}$ at the left of the endomorphism in order to saturate the tail. We have to be careful doing that as we also need to have a saturated head. To do that we add an $\overline{\omega_1 \tilde{\omega_1}}$ before the $*$.

All of this gives the word $p = \tilde{\omega_1}\overline{\omega_1}^{m}\overline{\omega_1 \tilde{\omega_1}} * \omega$. The following proves that $\phi$ is a saturated marker endomorphism.

The first assumption made on $\omega$ gives that the first $\tilde{\omega_1}$ is saturated, as $\omega$ cannot be a prefix of $\overline{\omega_1}^{m}\overline{\omega_1 \tilde{\omega_1}}$. For the same reason the first $\omega_1$ of $\overline{\omega_1}^{m}$ is also saturated. Then, the other $\omega_1$ of $\overline{\omega_1}^{m}$ are saturated because they are not preceded by $\tilde{\omega_1}$. For the $\overline{\omega_1 \tilde{\omega_1}}$ part, $\omega_1$ is followed by a $\tilde{\omega_1}$ and hence saturated, and $\tilde{\omega_1}$ preceded by a $\omega_1$, which makes him also saturated. Finally, the tail is saturated because the first $\tilde{\omega_1}$ cannot be matched with the $m$ following $\omega_1$. The head is not a suffix of $p[1,m+3+j]$ for any $j \in \llbracket 1,m \rrbracket$.
\end{proof}

Conversely, taking $\alpha$ empty in the following result, we obtain that given a wake it exists a sheet where there is either no herd to turn around or no space for a loop to only capture this herd.

\begin{prp}
    Given $\alpha \in \{A,B\}^{n}$ and $\omega \in  \{A,B\}^{m}$ two finite words, with $n \in \mathbb{N}$ and $m \in \mathbb{N}$, there exist $k \in \mathbb{N}$ and $\gamma \in  \{A,B\}^{k}$ such that $\gamma\alpha*\omega$ is not an automorphism.
\end{prp}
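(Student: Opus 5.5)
The idea is to choose $\gamma$ so that the first letter of the new head $\gamma\alpha$ is unsaturated. Then $\gamma\alpha*\omega$ is not saturated, and \Cref{mainthm} shows that it is not bijective. The convention $\alpha[1,0]=\emptyset$ helps here: for the index $i=1$, the second condition in the definition of saturation always fails, because the empty word is a suffix of any word. So only the first condition needs to be made to fail at $i=1$.

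Concretely, I would take $k=m+1$ and $\gamma = A\omega$, so the new head is $\alpha' = A\omega\alpha$, of length $N=m+n+1$. Let $\phi$ be the marker endomorphism generated by $\alpha'*\omega$. At the index $i=1$ of the head we have $\alpha'[2,N] = \omega\alpha$, and the word $\alpha'[2,N]*\omega = \omega\alpha*\omega$ begins with $\omega$ whichever letter replaces the $*$. Hence "$\omega$ is not a prefix of $\alpha'[2,N]*\omega$" is false. The other condition, "$\alpha'[1,0]$ is not a suffix of $\alpha'$", is false because $\alpha'[1,0]$ is empty.

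So neither alternative holds for $i=1$, and the first bullet in the definition of saturated fails. Therefore $\phi$ is not saturated, and by \Cref{mainthm} it is not bijective, i.e.\ $\gamma\alpha*\omega$ is not an automorphism.

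The only step needing care is the degenerate cases, which the definition allows since $n,m\in\mathbb{N}$ may be $0$. If $n=0$, the argument is unchanged, since it never uses the letters of $\alpha$. If $m=0$, then $\omega$ is empty and is a prefix of every word, so $\gamma=A$ already gives an unsaturated first letter; the construction $\gamma=A\omega=A$ covers this case too. I expect no real obstacle: the whole content is the choice of $\gamma$ together with the equivalence between saturation and bijectivity in \Cref{mainthm}.
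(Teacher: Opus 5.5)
Your proof is correct and is essentially the paper's own argument. The paper takes $\gamma = h\omega$ with $h\in\{A,B\}$ arbitrary, notes that the leading $h$ of $h\omega\alpha*\omega$ is unsaturated, and concludes by \Cref{mainthm}, handling empty $\omega$ separately just as you do. In this case $i=1<n+2$, so the relevant direction of \Cref{mainthm} is the lemma of \Cref{Markers} on unsaturated letters far from the $*$, and its witness is the periodic point $\overline{A\omega\alpha}$.
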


\begin{proof}
    If $\omega$ is not empty we can take $\gamma = h\omega$, with $h \in \{A,B\}$ arbitrary. Then, in $h\omega\alpha*\omega$ the first letter $h$ is not saturated, which gives the result. If $\omega$ is empty we just need a non empty head to obtain a non bijective marker endomorphism.
\end{proof}

\begin{rmk}
    In the two previous results we fixed a wake $\omega$, but we could also fix a sheet $\alpha$ and obtain similar results on the tail.
\end{rmk}

In Lipa's conjectures the notion of kneading sequence appears to give a coding $K(W)$ to a wake $W$. However, as explained in \cite{Bruin}, all the words $\omega \in \{A,B\}$ are not able to form a kneading sequence. For example, the kneading sequence $BABBAA$ cannot appear. However, our precedent algorithm gives us that, $B*BABBAA$ is an automorphism. Thus, if this automorphism is generated by the monodromy action, it would have to arise from the concatenation of various loops. For that reason, it would be interesting to ask :

\begin{qtn} 
Is there a loop having $B \star BABBAA$ as monodromy action?
\end{qtn}

Finally, checking the saturation of a single marker endomorphism is quadratic on its size. Hence, we can obtain a list of the single marker automorphisms up to a certain size. These are exactly the marker automorphisms with head of size less than $2$ and tail of size less that $3$ :

\renewcommand{\labelitemi}{}

\begin{multicols}{5}
\begin{itemize}[label={}, leftmargin=0pt, itemindent=0pt, labelsep=0pt, topsep=0pt, itemsep=0pt, parsep=0pt]
    \item $A*BA$
    \item $A*BBA$
    \item $A*ABB$
    \item $B*AB$
    \item $B*BAA$
    \item $B*AAB$
\columnbreak
    \item $AA*BA$
    \item $AA*BAA$
    \item $AA*BBA$
    \item $AA*BAB$
    \item $AA*ABB$
    \item $BA*B$
\columnbreak
    \item $BA*BA$
    \item $BA*BB$
    \item $BA*BBA$
    \item $BA*AAB$
    \item $BA*ABB$
    \item $BA*BBB$
\columnbreak
    \item $AB*A$
    \item $AB*AA$
    \item $AB*AB$
    \item $AB*AAA$
    \item $AB*BAA$
    \item $AB*BBA$
\columnbreak
    \item $AB*AAB$
    \item $BB*AB$
    \item $BB*BAA$
    \item $BB*ABA$
    \item $BB*AAB$
    \item $BB*ABB$
\end{itemize}
\end{multicols}

\renewcommand{\labelitemi}{$\blacktriangleright$}
\renewcommand{\labelitemii}{$\triangleright$}

This exhaustive list of compound marker endomorphisms allows experimental considerations to know if $\Gamma$ contains all the single marker automorphisms. For now, in \cite{Arai}, and thanks to his algorithm explained in \Crefname{section}{Appendix}{Appendices}\Cref{Calculations}\Crefname{section}{Section}{Sections}, Arai proved that the following single marker endomorphisms are in $\Gamma$ : $A*BA$; $B*BAA$; $BA*BA$ and $AAB*BAA$. By conjugating with $*$, this also gives the following single marker automorphisms : $B*AB$; $A*ABB$, $AB*AB$ and $BBA*ABB$. However, with regard to the previous list, this is still unsatisfactory.

\section{Compound marker endomorphisms}
\label{Compound}

This short section explains why the notion of saturated marker endomorphism do not extend easily to the compound case.\\

At first sight, taking the composition of two marker endomorphisms can seem a bit similar to taking their compound marker endomorphism. However, it allows some interactions between marker endomorphisms which makes the situation difficult. For example, Lipa proved in his thesis \cite{Lipa} that the marker $A*BAA, A*BABBA$ is an involution even if it is made from two non bijective markers. Conversely, Lipa also proved that the marker $A*BAA, A*BABBA, B*BBA$ is not bijective, while $B*BAA$ and $A*BAA, A*BABBA$ are.

A simple generalisation of saturation could be to require that each letter of each word cannot be surrounded by another word of the compound marker endomorphism. However, the example of the marker automorphism $A*BAA, A*BABBA$, shows that this criterion is not sufficient. A possible approach would be to require that for any unsaturated letter, there is another string where this letter is swapped, as in $A*BAA, A*BABBA$ where the last $A$ of $A*BAA$ and the second $B$ of $A*BABBA$ are unsaturated. Even so, the situation remains complicated as some letters can be added after the unsaturated character, and may not be saturated.

Anyway, having a criterion for involutive marker automorphisms is not enough, as \[ABBABA*ABBAB, ABBAB*BABBAB, ABBAB*AABBAB\] is of order $4$. Its construction is inspired by Theorem 6.13 of \cite{Hedlund} which embeds finite groups in the automorphisms of the shift.

Thanks to \cite{Hedlund} and \cite{Arai}, we also know the existence of compound marker endomorphisms of infinite order. For example, the composition of $*$ and $AAB*BAA$, which can be expressed as the compound marker automorphism made up of all the $\alpha * \omega$ with $\alpha , \omega \in \{A,B\}^3$, and $(\alpha , \omega) \neq (AAB,BAA)$, is one of them. Moreover, using the algorithm explained in \Crefname{section}{Appendix}{Appendices}\Cref{Calculations}\Crefname{section}{Section}{Sections} \cite{Arai} provided a loop with such monodromy.

\section{Permutive functions}
\label{Permutive}

This section links the notion of compound marker endomorphisms appearing in Lipa's conjectures to the tools developed by Hedlund in \cite{Hedlund}.\\

For $n \in \mathbb{N}$, we denote the set of functions from $\{A,B\}^{2n+1}$ to $\{A,B\}$ by $F(n)$.

\begin{dfn}
    For $f \in F(n)$, we define
    \begin{center}
    $f_\infty : \left\{
    \begin{array}{ccc}
    \Sigma_2 &\to &\Sigma_2  \\[1em]
     (x_k)_{k \in \mathbb{Z}} &\mapsto & (f(x_{k-n} ... x_{k+n}))_{k \in \mathbb{Z}}
    \end{array}
    \right.$\\
\end{center}
\end{dfn}

\begin{rmk}
    The original definition of \cite{Hedlund} only takes into account the slide $x_{k} ... x_{k+n}$. However, up to compose with the shift map, the definitions are equivalent. 
\end{rmk}

Theorem 3.1 of \cite{Hedlund} proves that $f_\infty$ is shift commuting and continuous. We denote by $F_\infty (n)$ the set of functions obtained this way, and $F_\infty = \bigcup\limits_{n \in \mathbb{N}} F_\infty (n)$. Theorem 3.4 of \cite{Hedlund} states that any shift commuting continuous function of $\Sigma_2$ is in $F_\infty$. Theorem 5.13 of \cite{Hedlund} also states that if a shift commuting continuous function is injective then it is bijective. Thus $\mathrm{Aut} (\Sigma_2)$ is the subset of injective functions of $F_\infty$.

In order to make the connection between compound marker endomorphisms and $F_\infty$, we need the following definition from \cite{Hedlund}.

\begin{dfn}[Permutive function]
    Given $f \in F(n)$, and $i \in \llbracket -n,n \rrbracket$, we say that $f$ is \emph{permutive} in $x_i$ if for any $x_{-n},...,x_{i-1},x_{i+1},...,x_{n} \in \{A,B\}$, the function from $\Sigma_2$ to $\Sigma_2$ defined by $g(s) = f(x_{-n},...,x_{i-1},s,x_{i+1},...,x_{n})$ is bijective.
\end{dfn}

We denote by $P_\infty$ the subset of $F_\infty$ consisting of permutive functions in $x_0$.

\begin{thm}\label{secthm}
The set of compound marker endomorphisms is $P_\infty$.
\end{thm}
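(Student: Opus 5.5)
The plan is to prove the two inclusions separately. The basic observation is that a function $f \in F(n)$ is permutive in $x_0$ exactly when, for each choice of the context $(x_{-n},\dots,x_{-1},x_1,\dots,x_n)$, the map $s \mapsto f(\dots,s,\dots)$ is a bijection of $\{A,B\}$. A bijection of $\{A,B\}$ is either the identity or the swap $s\mapsto\tilde s$. So $f$ permutive in $x_0$ is the same as the data of a subset $C_f \subset \{A,B\}^{2n}$ of contexts, namely those on which $f$ swaps the central letter, and this subset may be arbitrary.

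For the inclusion of compound marker endomorphisms into $P_\infty$, let $\phi$ be generated by $S_1,\dots,S_k$, with $S_l=\alpha^{(l)}*\omega^{(l)}$. Take $n \ge \max_l \max(|\alpha^{(l)}|,|\omega^{(l)}|)$. Whether $\phi$ swaps position $k$ of $c$ depends only on $c[k-n,k-1]$ and $c[k+1,k+n]$: the letter is swapped iff for some $l$ the word $c[k-|\alpha^{(l)}|,k-1]$ equals $\alpha^{(l)}$ and $c[k+1,k+|\omega^{(l)}|]$ equals $\omega^{(l)}$. In particular, whether it is swapped does not depend on $c_k$ itself, because $*$ matches both letters. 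Define $C$ as the set of contexts $(u,v)\in\{A,B\}^n\times\{A,B\}^n$ such that $u$ ends with some $\alpha^{(l)}$ and $v$ begins with the corresponding $\omega^{(l)}$. Set $f(u s v)=\tilde s$ if $(u,v)\in C$ and $f(usv)=s$ otherwise. Then $f_\infty=\phi$, because the definition of $\phi$ acts at all matching positions simultaneously using the original sequence $c$, exactly as $f_\infty$ does. Finally, $f$ is permutive in $x_0$ by the observation above.

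For the reverse inclusion, take $f\in F(n)$ permutive in $x_0$ and let $C_f$ be its set of swapping contexts. For each $(u,v)\in C_f$ take the word $u*v$, with head and tail both of length $n$. The compound marker endomorphism generated by these finitely many words swaps position $k$ iff $(c[k-n,k-1],c[k+1,k+n])\in C_f$, which is precisely what $f_\infty$ does. So it equals $f_\infty$.

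The only degenerate case is $C_f=\emptyset$, where $f_\infty$ is the identity. I would treat it either by allowing the empty family of words, or by noting that a word such as $u*v$ which can never match does not exist, and then checking the paper's conventions on $k\ge 1$. I expect this bookkeeping, together with making precise that the simultaneous action in the marker definition reads the original sequence rather than a partially modified one, to be the only subtle point. The rest is a direct translation between the two definitions.
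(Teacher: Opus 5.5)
Your proof is correct and follows the paper's argument. For compound to $P_\infty$, you take $n$ to be the maximal head/tail length and let $f\in F(n)$ swap the central letter exactly on the matching contexts. Conversely, you list $\alpha*\omega$ for every context $(\alpha,\omega)\in\{A,B\}^n\times\{A,B\}^n$ on which $s\mapsto f(\alpha,s,\omega)$ is the nontrivial permutation. Your remark on the empty family of words is a convention the paper passes over silently: allowing it is needed so that the identity, which lies in $P_\infty$, counts as a compound marker endomorphism.
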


\begin{proof}
    First, given $\alpha_1 * \omega_1,...,\alpha_k * \omega_k$ a compound marker endomorphism, we define $n = \max\limits_{i \in \llbracket 1,k \rrbracket} |\alpha_i |, |\omega_i|$. Then, we define $f \in F(n)$ as follows. If for some $i \in \llbracket 1,k \rrbracket$, $s[n-|\alpha_i|+1,n] = \alpha_i$ and $s[n+2,n+1+|\omega_i|] = \omega_i$, then $f(s) = \tilde{s_{n+1}}$. Else $f(s) = s_{n+1}$. With this definition, $f_\infty$ is the compound marker endomorphism generated by $\alpha_1 * \omega_1,...,\alpha_k * \omega_k$, and $f$ is permutive in $x_0$.

    Conversely, given $f \in F(n)$ permutive in $x_{0}$, we will construct a corresponding compound marker endomorphism. We start with an empty list $l$, and for each $\alpha, \omega \in \{A,B\}^{n}$, if $s \mapsto f(\alpha,s,\omega)$ is the non trivial permutation of $\{A,B\}$, we add $\alpha * \omega$ to $l$. Thus, the compound marker endomorphism arising from $l$ is $f$.
\end{proof}

\begin{rmk}
    Theorem 6.9 of \cite{Hedlund} and the previous corresponding between permutive functions and compound marker endomorphisms allows to generalize \Cref{prop:sw} as follows. For any $\omega_1,...,\omega_k$ finite words on $\{A,B\}$, the compound marker endomorphisms $*\omega_1,...,*\omega_k$ and $\omega_1*,...,\omega_k *$ are bijective if and only if they are equal to the compound marker endomorphism $*$.
\end{rmk}

\begin{rmk}
    If $f \in F(n)$ is permutive in an other variable than $x_{0}$ then $f$ can be expressed as the composition of a compound marker endomorphism and a power of the shift.
\end{rmk}

Thanks to Hedlund's results, to know if the compound marker automorphisms and the shift are generating $\mathrm{Aut} (\Sigma_2)$, we only need to study the functions $f : \{A,B\}^n \to \{A,B\}$ which are not permutive in any of their variables, and such that $f_\infty$ is injective. This study can be summarized by the following two questions :

\begin{itemize}
    \item Is there $n \in \mathbb{N}$ and $f \in F(n)$ not permutive in any of its variables, with $f_{\infty}$ injective ? \vspace{1em}
    
    \item If there is such $f \in F(n)$, can we express it as the composition or the inverse of compound maker endomorphisms ?
\end{itemize}

\vspace{2em}

\textbf{Acknowledgements.}
This paper comes from an internship under the supervision of Yutaka Ishii, at the university of Kyushu, in Japan. I am deeply grateful to him for his advices and corrections. I would also like to thank Toshiki Usui who encouraged me to study a bijectivity criterion for single marker endomorphisms and who, together with Thomas Richards, answered many of my questions.

\printbibliography

\appendix
\renewcommand{\thesection}{\Alph{section}}
\renewcommand{\thesubsection}{\Alph{section}.\arabic{subsection}}
\renewcommand{\thesubsubsection}{\Alph{section}.\arabic{subsection}.\arabic{subsubsection}}

\newpage

\section{Computer calculations}
\label{Calculations}

This appendix explains the algorithm of Arai, from \cite{Arai}, used to calculate the monodromy action of specific loops. We also used it to check the first conjecture of Lipa for the loop surrounding the herd on the $B$ sheet of the $BAA$ wake of \Cref{figure:loop}. As the automorphisms studied are commuting with the shift, we only need to study one fixed position in the coding of the points. The goal is to understand when this digit changes.\\

A crucial observation due to \cite{Hubbard} is that for $b,c$ fixed, the Julia set $J(H_{b,c})$ is trapped in the four dimensional cube of size $\beta = \frac{1}{2} (|b|+ 2+ \sqrt{(|b|+2)^2 + 4|c|})$. Thus, we can chose $\epsilon > 0$ small and subdivide $[-\beta,\beta]^4$ is small cubes of side $\epsilon$. Then, as the H\'enon map and its inverse are defined using only products and sums, we can use interval arithmetic to calculate strict bounds for the forward and backward images of a small cube of size $\epsilon$. As the Julia set is invariant by both $H_{b,c}$ and its inverse, if one of the small cubes escapes the cube of side $\beta$ after some iterations, then it cannot contain the Julia set. By iterating a lot of times both backward and forward, on cubes small enough, we obtain a quite sharp approximation of the Julia set.

\begin{figure}[H]
\centering
\includegraphics[width=0.8\linewidth]{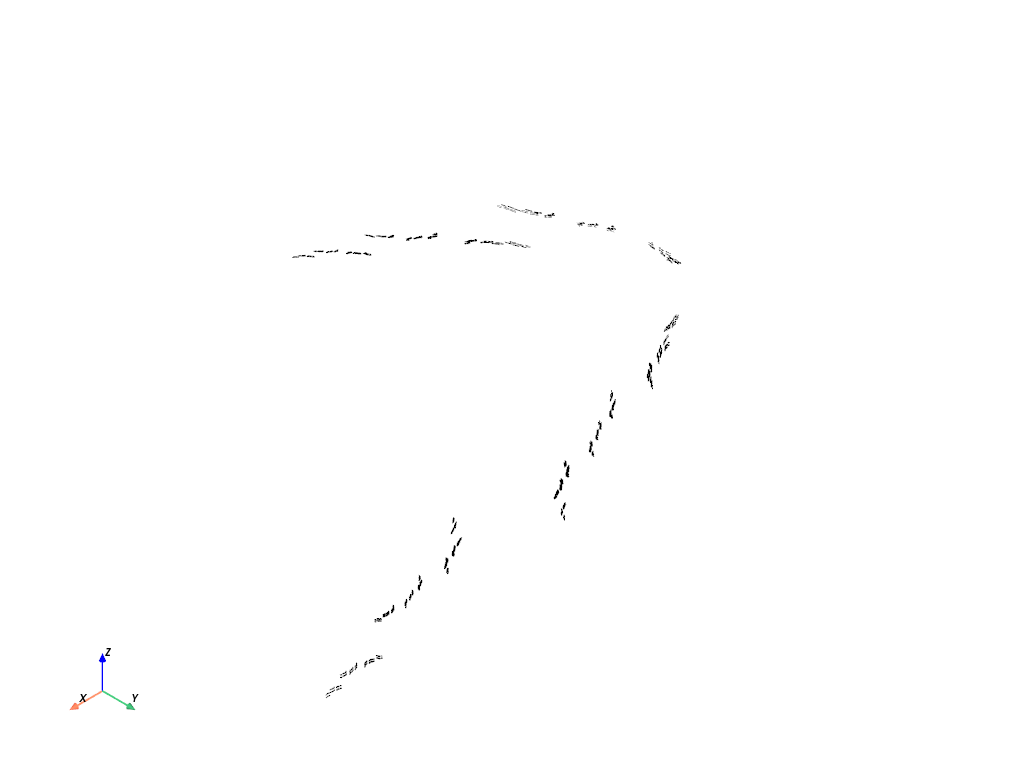}
\caption{Approximation of $J(H_{b,c})$ projected by dropping $\Im (y)$}
\label{figure:julia}
\end{figure}

Moreover, if we know how the Markov partition is constructed, and $\epsilon$ is small enough, we can obtain some digits of the coding of each cube, depending where are his forward and backward images. In \Cref{figure:julia}, for example, the Markov partition is done by labelling $A$ the points where $\Re (y) > 0$, and $B$ the points with $\Re (y) < 0$.

At this point we use interval arithmetic at a fixed value for $b,c$. However, we can also trap a loop in the parameter space in small cubes of size $\eta$, and use interval arithmetic both for $b,c$ and $x,y$. Doing this allows to calculate an exact approximation of the Julia sets for all the parameter values in the size $\eta$ boxes.

Now, to calculate the monodromy action, what we would like to do is to start at a point where we know the Markow partition and then spread the coding around the loop. To do that we need to suppose that the loop is contained in the hyperbolic locus. Thus, if we fix a code $x \in \Sigma_2$, and take a small cube of values for $b,c$, the set of points coded by $x$ in all the $J(H_{b,c})$ is connected. Thanks to hyperbolicity, we know that if a point of the Julia set is labelled by $A$ then it is in the same connected component as an initial point labelled by $A$, thus if we can divide the cubes in two sets at a positive distance, one having all the initial cubes coded $A$ and the other the initial cubes coded $B$, we can spread the coding of the initial cubes to their whole connected component. By iterating, if $\epsilon$ and $\eta$ are small enough, we achieve to propagate the Markov partition along the loop. Then, it only remain to compare with the initial partition to see what blocks moved. After that, we can obtain a more precise coding of the cubes that moved and of those who did not.

By supposing the hyperbolicity around the loop of \Cref{figure:loop}, running the algorithm for $\epsilon = \eta = 2^{-10}$ gives that the first digit is swapped if and only if it is surrounded by $B$ and $BAA$. Thus, the monodromy action is the single marker automorphism $B*BAA$. In his paper \cite{Arai}, Arai was able to check the hyperbolicity, and calculate the monodromy action for other loops in different slices of the parameter space. He obtained the single marker automorphisms $A*BA$; $B*BAA$; $BA*BA$ and $AAB*BAA$.

\end{document}